\def\al{\alpha}
\def\be{\beta}
\def\Ga{\Gamma}
\def\Ph{\Phi}
\def\fX{{\mathfrak X}}
\def\P{{\mathbb P}}
\def\a{{\mathfrak a}}
\def\m{{\mathfrak m}}
\def\p{{\mathfrak p}}
\def\Ann{\operatorname{Ann}}
\def\Ass{\operatorname{Ass}}
\def\Assh{\operatorname{Assh}}
\def\Cx{\operatorname{cx}}
\def\depth{\operatorname{depth}}
\def\dim{\operatorname{dim}}
\def\Ext{\operatorname{Ext}}
\def\Gcdim{\operatorname{G}_{C}\operatorname{-dim}}
\def\Gdim{\operatorname{G-dim}}
\def\Gidim{\operatorname{Gid}}
\def\Hom{\operatorname{Hom}}
\def\hyphen{\operatorname{-}}
\def\id{\operatorname{id}}
\def\lcidim{\operatorname{CI_{\ast }-dim}}
\def\Min{\operatorname{Min}}
\def\mod{\operatorname{mod}}
\def\pd{\operatorname{pd}}
\def\sup{\operatorname{sup}}
\def\syz{\mathsf{\Omega}}
\def\Tor{\operatorname{Tor}}
\def\mod{\mathsf{mod}\,}
\def\rnum#1{\expandafter{\romannumeral #1}}
\def\Rnum#1{\uppercase\expandafter{\romannumeral #1}}
\theoremstyle{definition}
\newtheorem{thm}{Theorem}[section]
\newtheorem*{thm*}{Theorem}
\newtheorem{dfn}[thm]{Definition}
\newtheorem{ex}[thm]{Example}
\newtheorem{cor}[thm]{Corollary}
\newtheorem{rem}[thm]{Remark}
\newtheorem{ques}[thm]{Question}
\def\rnum#1{\expandafter{\romannumeral #1}} 
\def\Rnum#1{\uppercase\expandafter{\romannumeral #1}}
\title[Prethick subcategories of modules]{Prethick subcategories of modules and characterizations of local rings}
\author{Hiroki Matsui} 
\address{Graduate School of Mathematics, Nagoya University, Furocho, Chikusaku, Nagoya, Aichi 464-8602, Japan}
\email{m14037f@math.nagoya-u.ac.jp}
\author{Hayato Murata}
\address{Graduate School of Mathematics, Nagoya University, Furocho, Chikusaku, Nagoya, Aichi 464-8602, Japan}
\email{m13047e@math.nagoya-u.ac.jp}
\date{\today}
\thanks{2010 {\em Mathematics Subject Classification.} 13C60, 13D05, 13H10}
\thanks{{\em Key words and phrases.} homological dimension, (pre)thick subcategory, regular sequence, system of parameters}
\begin{document}
\allowdisplaybreaks
\setlength{\baselineskip}{15pt}
\maketitle
\begin{abstract}
''This paper studies characterizing local rings in terms of homological dimensions.
The key tool is the notion of a prethick subcategory which we introduce in this paper. Our methods recover the theorems of Salarian, Sather-Wagstaff and Yassemi.
\end{abstract}
\section{Introduction}
Throughout this paper, let $R$ be a commutative Noetherian local ring with maximal ideal $\m$ and residue field $k$.
Denote by $\mod{R}$ the category of finitely generated $R$-modules.

We call a full subcategory of $\mod{R}$ {\it prethick} if it is closed under finite direct sums, direct summands, kernels of epimorphisms and cokernels of monomorphisms. 
A prethick subcategory closed under extensions is often called a {\it thick} subcategory, which has been well investigated so far; see \cite{BIK, KS, T2}.
What we want to study in this paper is the following question.
\begin{ques}
When does a prethick subcategory of $\mod{R}$ contain the residue field $k$?
\end{ques}
The main results of this paper are the following two theorems.
\begin{thm}
A prethick subcategory $\fX$ of $\mod{R}$ contains $k$ if there exists a finitely generated $R$-module $M$ satisfying the following two conditions:
\begin{enumerate}
\item[(1)]$\depth{R}\geq \depth_{R}{M}+1$. 
\item[(2)]$M/(x_{1},\ldots ,x_{i})M$ is in $\fX$ for all $i=0,\ldots,\depth_{R}{M}+1$ and all $R$-regular sequences $x_{1},\ldots,x_{i}$.
\end{enumerate}
\end{thm}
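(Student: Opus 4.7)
The plan is to prove the theorem by induction on $d := \depth_R M$, reducing to the base case $d = 0$ via a standard depth-reduction, and then handling the base case by constructing the socle of $M$ inside $\fX$.

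For the inductive step $(d \geq 1)$, since $\depth R \geq d + 1 \geq 2$ and $\depth_R M \geq 1$, prime avoidance produces an element $x \in \m$ outside $\bigcup_{\p \in \Ass R \,\cup\, \Ass_R M}\p$, so $x$ is simultaneously $R$-regular and $M$-regular. Set $R' := R/(x)$, $M' := M/xM$, and $\fX' := \fX \cap \mod R'$. The subcategory $\fX'$ is prethick in $\mod R'$ because kernels, cokernels, summands, and finite direct sums of $R'$-modules agree with the corresponding $R$-constructions. Condition~(1) transfers as $\depth R' = \depth R - 1 \geq d = \depth_{R'}M' + 1$, and for condition~(2) every $R'$-regular sequence $\bar y_1, \ldots, \bar y_i$ with $i \leq d$ lifts to an $R$-regular sequence $x, y_1, \ldots, y_i$ of length $i + 1 \leq d + 1$, so $M'/(y_1, \ldots, y_i)M' = M/(x, y_1, \ldots, y_i)M$ lies in $\fX \cap \mod R' = \fX'$. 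Applying the inductive hypothesis to $(R', M', \fX')$ then places $k \in \fX' \subseteq \fX$.

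For the base case $(d = 0)$ we have $\depth R \geq 1$ and $M, M/xM \in \fX$ for every $R$-regular $x$. From $0 \to xM \to M \to M/xM \to 0$ and $0 \to (0:_M x) \to M \xrightarrow{x} xM \to 0$, two applications of kernel-of-epi closure give $xM$ and $(0:_M x)$ in $\fX$ for every such $x$. Because $\depth_R M = 0$, the socle $\operatorname{soc} M := (0:_M \m)$ is a nonzero finite-dimensional $k$-vector space, and producing it in $\fX$ yields $k$ as a direct summand. Since $\depth R \geq 1$, prime avoidance yields a generating set $y_1, \ldots, y_s$ of $\m$ consisting of $R$-regular elements, and $\operatorname{soc} M = \bigcap_{i=1}^{s} (0:_M y_i)$. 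My plan is to realize this intersection inside $\fX$ through iterated use of the identity $A \cap B = \ker\!\bigl(A \oplus B \xrightarrow{+} A + B\bigr)$.

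The main obstacle is precisely this last step: because prethick subcategories lack closure under extensions, neither $(0:_M y_i) \cap (0:_M y_j)$ nor $(0:_M y_i) + (0:_M y_j)$ can be produced from the two summands alone. I would attack it by relating $(0:_M y_i) + (0:_M y_j)$ to $(0:_M y_i y_j)$, which already lies in $\fX$ because $y_i y_j$ is $R$-regular, through the short exact sequences arising from multiplication by $y_i$ and $y_j$ on $(0:_M y_iy_j)$, exploiting also that $y_iM$ and $(0:_M y_i)$ lie in $\fX$ for every $R$-regular choice. This Koszul-style bookkeeping should let us place the pairwise sums, and then by induction on $s$ the full intersection $\operatorname{soc} M$, into $\fX$.
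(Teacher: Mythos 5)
Your inductive step is exactly the paper's: pass to $R/(x)$ and $M/xM$ for a single element $x$ that is simultaneously $R$- and $M$-regular, observe that $\fX\cap\mod (R/(x))$ is prethick, and apply the inductive hypothesis. That part is fine.

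The base case is where the proposal stalls, and you say so yourself: after producing $(0:_M y)$ and $yM$ in $\fX$ for each $R$-regular $y$, you want to descend to $\operatorname{soc} M=\bigcap_i(0:_M y_i)$, but forming the intersection via $0\to A\cap B\to A\oplus B\to A+B\to 0$ requires knowing $A+B\in\fX$, and nothing in the five closure axioms lets you get that sum (extension-closure is exactly what is missing). The ``Koszul-style bookkeeping'' relating $(0:_M y_i)+(0:_M y_j)$ to $(0:_M y_iy_j)$ is not carried out, and the natural sequences one writes down (e.g. $0\to(0:_M y_1)\to(0:_M y_1y_2)\to y_1(0:_M y_1y_2)\to 0$, with $y_1(0:_M y_1y_2)=y_1M\cap(0:_M y_2)$) just reproduce the same kind of unavailable intersection. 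So this is a genuine gap, not a routine detail.

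The paper avoids the socle and intersections entirely. Fix one $R$-regular $x\in\m$ avoiding $\Ass M\setminus\{\m\}$. For $\alpha\gg0$ the submodule $N:=(0:_M x^{\alpha})$ stabilizes and equals $\Gamma_\m(M)$, and the four-term sequence $0\to N\to M\xrightarrow{x^\alpha}M\to M/x^\alpha M\to 0$ (split into two short exact sequences) puts $N$ in $\fX$ using only kernel-of-epi closure. Then $N$ has finite length; pick $n$ with $\m^nN=0\neq\m^{n-1}N$ and choose $y\in\m^{n-1}$ that is $R$-regular, $(M/N)$-regular, and not in $\Ann N$ (possible since every relevant prime is $\neq\m$ and $\m^{n-1}\not\subseteq\Ann N$). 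From $M/N\in\fX$, the two sequences $0\to M/N\xrightarrow{y}M/N\to M/(yM+N)\to 0$ and $0\to N/yN\to M/yM\to M/(yM+N)\to 0$ give $N/yN\in\fX$, hence $yN\in\fX$; and $yN$ is a nonzero $k$-vector space because $y\in\m^{n-1}$, $\m^nN=0$, and $y\notin\Ann N$. This ``stabilized annihilator plus high-socle-degree multiplier'' trick is the key idea your proposal is missing; it trades the unobtainable intersection $\bigcap(0:_M y_i)$ for a single nonzero $\m$-annihilated quotient reachable with only the five prethick closure operations.
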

\begin{thm}
A prethick subcategory $\fX$ of $\mod{R}$ contains $k$ if there exists a finitely generated $R$-module $M$ satisfying the following two conditions:
\begin{enumerate}
\item[(1)]$\dim{R}\geq \depth_{R}{M}+1$.
\item[(2)]$M/(x_{1},\ldots ,x_{i})M$ is in $\fX$ for all $i=0,\ldots,\depth_{R}{M}+1$ and all subsystems of parameters $x_{1},\ldots,x_{i}$ for $R$.
\end{enumerate}
\end{thm}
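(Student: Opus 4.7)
The plan is to reduce to the preceding theorem when possible and to handle the residual case by induction on $\dim R$. If $\depth R \geq \depth_R M + 1$, then every $R$-regular sequence of length at most $\depth_R M + 1 \leq \dim R$ is automatically a subsystem of parameters of $R$ (by Krull's principal ideal theorem), so condition~(2) of the preceding theorem follows from condition~(2) of the present theorem; the preceding theorem applied to $M$ then yields $k \in \fX$. I may thus assume $\depth R \leq \depth_R M$, so that $\dim R > \depth R$ and $R$ is not Cohen--Macaulay.

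In this setting I would induct on $\dim R$. For the inductive step with $t := \depth_R M \geq 1$, prime avoidance over the finite union $\Ass_R M \cup \{\p \in \Min R : \dim R/\p = \dim R\}$---no member of which equals $\m$, since $t \geq 1$ forces $\m \notin \Ass_R M$ and $\dim R \geq 2$ forces $\m$ not to be a minimal prime of maximal dimension---supplies $x \in \m$ that is simultaneously $M$-regular and a parameter of $R$. Setting $\bar R := R/xR$, $\bar M := M/xM$ and $\bar\fX := \{N \in \mod\bar R : N \in \fX\}$ (which is prethick in $\mod\bar R$ because the relevant operations agree with those in $\mod R$ on $\bar R$-modules), one has $\dim\bar R = \dim R - 1$ and $\depth_{\bar R}\bar M = t - 1$. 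Any subsystem of parameters $\bar y_1, \ldots, \bar y_i$ of $\bar R$ lifts to elements $\tilde y_j \in R$ making $x, \tilde y_1, \ldots, \tilde y_i$ a subsystem of parameters of $R$ of length $i+1 \leq t+1$, which together with the hypothesis for $M$ verifies the hypotheses of the present theorem for $\bar M$ over $\bar R$. The inductive hypothesis then produces $k \in \bar\fX \subseteq \fX$.

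The hard part is the case $t = 0$, which includes the base case $\dim R = 1$ of the induction. For any parameter $x$, multiplication by $x$ on $M$ factors as $M \twoheadrightarrow xM \hookrightarrow M$ with kernel $(0:_M x)$ and cokernel $M/xM$; two applications of closure under kernels of epimorphisms---first to $M \twoheadrightarrow M/xM$, giving $xM \in \fX$, and then to $M \twoheadrightarrow xM$, giving $(0:_M x) \in \fX$---produce an $R/xR$-module $L := (0:_M x)$ of depth $0$ in $\fX$. One would then like to iterate the dimension induction with $L$ as the new module over $\bar R = R/xR$. The central obstacle is checking condition~(2) for $L$ over $\bar R$: since for $t=0$ the hypothesis only supplies single-parameter quotients $M/zM$, and not $M/(x,\tilde y)M$, producing $L/\tilde y L \in \fX$ for a parameter lift $\tilde y$ requires a delicate diagram chase relating $L/\tilde y L$ to $M/\tilde y M$ via the natural inclusion $(0:_M x) \hookrightarrow M$ and its behavior under reduction modulo $\tilde y$. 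Overcoming this is the central technical step of the proof.
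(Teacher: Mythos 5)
There is a genuine gap: the case $\depth_R M = 0$ is not proved, and you acknowledge this yourself (``the central technical step of the proof''). This case is unavoidable. Your first-paragraph reduction lets you assume $\depth R \leq \depth_R M$, but when $\depth_R M = 0$ that forces $\depth R = 0$, and this situation is also the base case of your induction on $\dim R$ (when $\dim R = 1$, the hypothesis $\dim R \geq \depth_R M + 1$ forces $\depth_R M = 0$). The sketch you give for it --- pass to $L := (0:_M x)$ and iterate over $\bar R = R/xR$ --- is unlikely to close: $L$ again has depth $0$ (since $\m \in \Ass_R M$ implies $\m \in \Ass_R L$), so descending never escapes the hard case, and when $\dim R = 1$ one has $\dim \bar R = 0 < \depth_{\bar R} L + 1$, so the theorem cannot even be applied to $\bar R$ and $L$. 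On top of that, as you note, you do not see how to verify condition~(2) for $L$ over $\bar R$.

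The paper's proof is obtained by performing the indicated replacements (``$\depth \mapsto \dim$'', ``$\Ass R \mapsto \Min R$'', ``regular sequence $\mapsto$ subsystem of parameters'') in the proof of Theorem~3.1; in particular it inducts on $\depth_R M$ rather than on $\dim R$, and its base case uses a genuinely different device, which is the missing idea in your proposal. With $\depth_R M = 0$, one chooses a parameter $x$ outside $\bigcup_{\p \in \Ass M \setminus\{\m\}} \p$ and replaces $x$ by a power $x^{\alpha}$ so that $N := (0:_M x^{\alpha})$ stabilizes and equals $\Gamma_{\m}(M)$, a nonzero finite-length module; the four-term sequence $0 \to N \to M \xrightarrow{x^{\alpha}} M \to M/x^{\alpha}M \to 0$ puts $N \in \fX$. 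Writing $n$ for the integer with $\m^n N = 0 \neq \m^{n-1}N$, one then picks a parameter $y \in \m^{n-1}$ that is $M/N$-regular and avoids $\Ann N$; two short exact sequences built from $y$ and the extension $0 \to N \to M \to M/N \to 0$ first put $N/yN$ and then $yN$ into $\fX$, and $yN$ is a nonzero $k$-vector space because $\m(yN) \subseteq \m^n N = 0$, yielding $k \in \fX$. It is this $\Gamma_{\m}$-and-socle argument, not a diagram chase with $(0:_M x)$, that resolves the depth-zero case. Your reduction to Theorem~3.1 when $\depth R \geq \depth_R M + 1$ and your inductive step for $\depth_R M \geq 1$ are both fine (and the former is a neat observation the paper does not make), but they sidestep the difficulty rather than meeting it.
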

Using these theorems, we can recover all the results given in \cite{SSS}, and furthermore yield new results on $\Tor$ and $\Ext$ modules, complexities and Betti numbers. 
In Section 2 we give the precise definition of a prethick subcategory and make some examples. In Section 3 we prove the above two theorems and apply them to recover the results in \cite{SSS} and obtain new results.

\section{Prethick subcategories}
In this section we define a prethick subcategory and give some examples. 
\begin{dfn}
A full subcategory $\fX$ of $\mod{R}$ is said to be {\it prethick} if $\fX$ satisfies the following conditions.  
\begin{enumerate}
\item[(1)]$\fX$ is closed under isomorphisms: if $M$ is in $\fX$ and $N \in \mod{R}$ is isomorphic to $M$, then $N$ is also in $\fX$.
\item[(2)]$\fX$ is closed under finite direct sums: if $M_1,\ldots,M_n$ are in $\fX$, so is the direct sum $M_1\oplus\cdots\oplus M_n$.
\item[(3)]$\fX$ is closed under direct summands: if $M$ is in $\fX$ and $N$ is a direct summand of $M$, then $N$ is also in $\fX$. 
\item[(4)]$\fX$ is closed under kernels of epimorphisms: for any exact sequence $0\to L\to M\to N\to 0$ in $\mod{R}$, if $M$ and $N$ are in $\fX$, then so is $L$. 
\item[(5)]$\fX$ is closed under cokernels of monomorphisms: for any exact sequence $0\to L\to M\to N\to 0$ in $\mod{R}$, if $L$ and $M$ are in $\fX$, then so is $N$. 
\end{enumerate}
\end{dfn}
\begin{ex}{\label{qthick}}
Let $N$ be an $R$-module, $C$ a semidualizing $R$-module and $I$ an ideal of $R$.
The full subcategory of $\mod{R}$ consisting of modules $X$ satisfying the property $\P$ is prethick, where $\P$ is one of the following:
\begin{align*}
&(1) \Gcdim_{R}X<\infty. &
&(2) \Gdim_{R}X<\infty. &
&(3) \lcidim_{R}X<\infty. & \\
&(4) \Gidim_{R}X<\infty. &
&(5) \pd_{R}X<\infty. &
&(6) \id_{R}X<\infty. &\\
&(7) \Tor^{R}_{\gg 0}(X,N)=0. & 
&(8) \Ext^{\gg 0}_{R}(X,N)=0. &
&(9) \Ext^{\gg 0}_{R}(N,X)=0. &\\
&(10) \Cx_{R}X<\infty. &
&(11) \sup_{i\ge0}\{{\beta^{R}_i(X)}\}<\infty. &
&(12) IX=0.&
\end{align*}
Here, $\Gcdim$, $\Gdim$, $\lcidim$, $\Gidim$, $\Cx$ and $\beta_i$ denote Gorenstein dimension with respect to $C$, Gorenstein dimension, lower complete intersection dimension, Gorenstein injective dimension, complexity and $i$-th Betti number, respectively. For their definitions, we refer the reader to \cite{A, SSS}.
\end{ex}
\begin{proof}
We give a proof of the assertion only for the property (1). The assertion for the other properties is shown easily, or similarly, or by \cite[Theorem 4.2.4]{A}, \cite[Theorem 2.25]{H}, \cite[Example 2.4 (10)]{T}.
 
Set $n=\depth{R}$. Let $\fX$ be the full subcategory of $\mod{R}$ consisting of all modules $X$ with $\Gcdim_{R}X<\infty$. It is easy to see from the definition of $\Gcdim$ that $\fX$ is closed under isomorphisms, finite direct sums and direct summands. Let 
$$
0 \to L \to M \to N \to 0
$$
be an exact sequence in $\mod{R}$ with $\Gcdim_{R}M<\infty$.
Taking the $d$-th syzygies, where $d=\dim{R}$, induces an exact sequence
$$
0 \to \syz^d L \to \syz^d M \xrightarrow{\alpha} \syz^d N \to 0
$$
up to free summands. 
Note have that $\syz^d M$ is totally $C$-reflexive. 
It follows from \cite[Theorem 2.1]{ATY} that if $\syz^d N$ is totally $C$-reflexive, then so is $\syz^d L$.
This shows that $\fX$ is closed under kernels of epimorphisms. 
Take an exact sequence
$$
0 \to \syz^{d+1} N \to F \xrightarrow{\beta} \syz^d N \to 0
$$
with $F$ free.
The pullback diagram of $\alpha$ and $\beta$ yields an exact sequence
$$
0 \to \syz^{d+1} N \to \syz^d L\oplus F  \to \syz^d M \to 0. 
$$
Again by \cite[Theorem 2.1]{ATY} we observe that if $\syz^d L$ is totally $C$-reflexive, then so is $\syz^{d+1}N$.
This implies that $\fX$ is closed under cokernels of monomorphisms. 
\end{proof}
\begin{rem}
A full subcategory $\fX$ of $\mod{R}$ is said to be {\it closed under extensions} provided that for an exact sequence $0 \to L \to M \to N \to 0$ in $\mod{R}$, if  $L$ and $N$ are in $\fX$, then so is $M$.
A prethick subcategory closed under extensions is called a {\it thick} subcategory. 
By definition any thick subcategory is prethick, but the converse does not necessarily hold. In fact, let $\fX$ be the full subcategory of $\mod{R}$ consisting of all modules that are annihilated by the maximal ideal $\m$.
As we saw in Example \ref{qthick}, $\fX$ is a prethick subcategory of $\mod{R}$.
Suppose that $\fX$ is closed under extensions. Then consider the natural short exact sequence
$$
0 \to \m/\m^2 \to R/{\m}^2 \to k \to 0
$$
in $\mod{R}$. 
Since $k$ and $\m/\m^2$ belong to $\fX$ and $\fX$ is assumed to be closed under extensions, $R/{\m}^2$ also belongs to $\fX$, which means $\m=\m^2$.
Nakayama's lemma implies $\m=0$, and it follows that $R$ is a field.
Consequently, unless $R$ is a field, the subcategory $\fX$ is a prethick non-thick subcategory. 
\end{rem}
\section{Main results and their applications}
In this section, we prove our main results and provide several results as corollaries, including the results in \cite{SSS}.
Let us state and prove our first main result. 
\begin{thm}{\label{mainthm1}}
Let $M$ be a finitely generated $R$-module and $\fX$ a prethick subcategory of $\mod{R}$. If $M$ satisfies the following conditions, then $\fX$ contains $k$. 
\begin{enumerate}
\item[(1)]$\depth{R}\geq \depth_{R}{M}+1$.
\item[(2)]$M/(x_{1},\ldots ,x_{i})M$ is in $\fX$ for all $i=0,\ldots,\depth_{R}{M}+1$ and all $R$-regular sequences $x_{1},\ldots,x_{i}$. 
\end{enumerate}
\end{thm}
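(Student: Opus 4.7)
The plan is to prove the theorem by induction on $d := \depth_R M$. The inductive step $d \geq 1$ is reasonably clean and reduces the problem to the ring $R/xR$ for a suitable regular element $x$, while the base case $d = 0$ requires a separate, more delicate argument.

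For the inductive step, I apply prime avoidance to $\Ass(R) \cup \Ass(M)$ to find $x \in \m$ that is both $R$-regular and $M$-regular. Set $\bar R := R/xR$ and $\fY := \fX \cap \mod \bar R$. A short verification shows $\fY$ is a prethick subcategory of $\mod \bar R$: any short exact sequence of $\bar R$-modules is also one of $R$-modules, and membership in $\fY$ is equivalent to membership in $\fX$ within $\mod \bar R$. Since $x$ is $M$-regular, $\depth_{\bar R}(M/xM) = d - 1$, and $\depth \bar R = \depth R - 1 \geq d$ verifies condition (1) for $M/xM$ over $\bar R$. For condition (2): given any $\bar R$-regular sequence $y_1, \ldots, y_i$ with $i \leq d$, the sequence $x, y_1, \ldots, y_i$ is $R$-regular of length at most $d+1$, so condition (2) for $M$ places $M/(x, y_1, \ldots, y_i)M = (M/xM)/(y_1, \ldots, y_i)(M/xM)$ in $\fX$, and annihilation by $x$ puts it in $\fY$. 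The induction hypothesis applied to $(\bar R, M/xM, \fY)$ then yields the residue field of $\bar R$, namely $k$, in $\fY \subseteq \fX$.

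For the base case $d = 0$, the module $M$ has $k \hookrightarrow M$, and the hypothesis gives $M, M/xM \in \fX$ for every $R$-regular $x \in \m$. Splitting multiplication by $x$ on $M$ yields the short exact sequences
$$
0 \to xM \to M \to M/xM \to 0 \quad\text{and}\quad 0 \to (0:_M x) \to M \to xM \to 0,
$$
and two applications of closure under kernels of epimorphisms give $xM, (0:_M x) \in \fX$ for every such $x$.

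The main obstacle is extracting the residue field $k$ itself from these modules; unlike in the inductive step, the hypothesis only allows cutting $M$ by a single regular element, so a direct reduction to a smaller ring fails (for instance when $\depth R = 1$). The strategy is to produce a short exact sequence $0 \to k \to A \to B \to 0$ with $A, B \in \fX$, whence $k \in \fX$ by closure under kernels of epimorphisms. Promising candidates arise from comparing $M/xM$ for different choices of $R$-regular $x$ and iterating the construction on the submodules $0:_M x$; the target is a module $N \in \fX$ equipped with an $N$-regular element $y \in \m$ such that $N/yN \cong k$, so that closure under cokernels of monomorphisms applied to $0 \to N \xrightarrow{y} N \to k \to 0$ finishes the argument.
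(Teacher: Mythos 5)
Your inductive step for $\depth_R M \geq 1$ is essentially identical to the paper's: choose $x\in\m$ that is $R$- and $M$-regular, pass to $\bar R=R/(x)$, restrict $\fX$ to $\bar R$-modules, and invoke the induction hypothesis. That part is correct and complete.

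The base case $\depth_R M=0$, however, is left as an unrealized strategy, and this is where the real content of the theorem lies. You correctly assemble the exact sequences giving $xM, (0:_M x)\in\fX$, but then state only the \emph{target}---find $N\in\fX$ with an $N$-regular element $y$ such that $N/yN\cong k$---without constructing it. This is a genuine gap, and in fact the paper does not construct such a pair $(N,y)$ at all; it takes a different route. The paper sets $N=\Gamma_\m(M)=(0:_M x^\alpha)$ for a suitable $R$-regular $x$, which is a nonzero finite-length submodule, and shows $N\in\fX$ via the four-term sequence $0\to N\to M\xrightarrow{x^\alpha} M\to M/x^\alpha M\to 0$. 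The decisive trick you are missing is the choice of the second element $y$: if $n$ is minimal with $\m^n N=0$, the paper picks $y\in\m^{n-1}$ that is $R$-regular, $M/N$-regular, and does not kill $N$. After showing $N/yN\in\fX$ (via $M/N$ and the sequence induced by $R/(y)\otimes_R-$) and hence $yN\in\fX$, the point is that $yN$ is nonzero and satisfies $\m\cdot yN\subseteq\m^n N=0$, so $yN$ is a nonzero $k$-vector space. Closure under direct summands then gives $k\in\fX$. That is, instead of trying to arrange $N/yN\cong k$ (which may be impossible with only one application of the hypothesis available), one arranges for a nonzero module annihilated by $\m$ by placing $y$ deep inside $\m$. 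Without this idea, your base case does not close, and so the proposal as written does not prove the theorem.
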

\begin{proof}
We use induction on $\depth_{R}{M}$. 
First, consider the case $\depth_{R}{M}=0$. By assumption we have $\depth{R}>0$. Therefore, by prime avoidance, we can take an $R$-regular element $x \in \m$ such that
\[
x \notin \bigcup_{\p \in \Ass{M}\setminus \{\m\}}\p. 
\]
Since $M$ is Noetherian, there exists an integer $\al\geq 0$ such that $[0:_{M}x^{\al}]=[0:_{M}x^{{\al}+i}]$ for all $i\geq 0$. 
The assumption $\depth_{R}{M}=0$ yields $[0:_{M}x^{\al}]\neq 0$, and set $N=[0:_{M}x^{\al}]$. The exact sequence
\begin{align*}
0{\longrightarrow } N {\longrightarrow } M \stackrel{x^{\al}}{\longrightarrow } M {\longrightarrow }M/x^{\al}M {\longrightarrow }0
\end{align*}
induces $N\in\fX$. Since $\Ass{N}=\Ass\Hom(R/(x^{\al}),M)=V(x)\cap\Ass{M}=\{\m\}$, the $R$-module $N$ has finite length. Hence there exists an integer $n>0$ such that ${\m}^{n}N=0$ and ${\m}^{n-1}N\neq 0$. 
Note that $N=\Ga_{\m}(M)$. Since $\depth_{R}{M/N}>0$, 
we can take an $M/N$-regular and $R$-regular element $y \in \m^{n-1}$ such that
\[
y \notin {\bigcup_{\p \in \Ass{M}\setminus\{\m\}}\p \cup \Ann{N}}.
\]
As $M$ is noetherian, there exists an integer $\be\geq 0$ such that $[0:_{M}y^{\be}]=[0:_{M}y^{{\be}+i}]$ for all $i\geq 0$. 
The above argument implies $[0:_{M}y^{\be}]=\Ga_{\m}(M)=N$. 
From the short exact sequence 
\begin{align}
0{\longrightarrow } N {\longrightarrow } M {\longrightarrow } M/N {\longrightarrow }0,  \tag*{$(\ast)$}
\end{align}
we have $M/N\in \fX$. 
The fact that $y$ is $M/N$-regular gives two exact sequences
\begin{align*}
0{\longrightarrow } M/N \stackrel{y}{\longrightarrow } M/N {\longrightarrow } M/(yM+N) {\longrightarrow }0, 
\end{align*} 
\begin{align*}
0{\longrightarrow } N/yN {\longrightarrow } M/yM {\longrightarrow } M/(yM+N) {\longrightarrow }0, 
\end{align*} 
where the latter one is induced by applying $R/(y)\otimes_{R}-$ to $(\ast)$. It is seen that $M/(yM+N)$ belongs to $\fX$, and hence so does $N/yN$. The natural exact sequence 
\begin{align*}
0{\longrightarrow } yN {\longrightarrow } N {\longrightarrow } N/yN {\longrightarrow }0 
\end{align*} 
shows $yN\in\fX$. As $yN\neq 0$ and $\m{(yN)}\subseteq {\m}^{n}N=0$, we see that $yN$ is a direct sum of $k$. Since $\fX$ is closed under direct summands, $\fX$ contains $k$.

Next, let us consider the case $\depth_{R}{M}>0$. 
By prime avoidance, we can take an $R$-regular and $M$-regular element $x \in \m$.
Set $\overline{R}=R/(x)$ and $\overline{M}=M/xM$. 
Fix an integer $0\leq i\leq \depth_{R}{\overline{M}}+1$ and take an $\overline{R}$-regular sequence $\overline{x_{1}}, \ldots ,\overline{x_{i}}$. Then $0\leq 1\leq i+1\leq \depth_{R}{M}+1$, and $x, x_{1},\ldots, x_{i}$ is an $R$-regular sequence. Note that $\overline{M}/(\overline{x_{1}},\ldots, \overline{x_{i}})\overline{M}=M/(x, x_{1}, \ldots ,x_{i})M$ is in $\fX$. Let $\overline{\fX}$ be the full subcategory of $\mod{\overline{R}}$ consisting of all $\overline{R}$-modules that belong to $\fX$ as $R$-modules. Then it is easy to see that $\overline{\fX}$ is a prethick subcategory of $\mod{\overline{R}}$, and the $\overline{R}$-module $\overline{M}/(\overline{x_{1}},\ldots, \overline{x_{i}})\overline{M}$ belongs to $\overline{\fX}$. The induction hypothesis implies that $k$ is in $\overline{\fX}$, and so we get $k\in\fX$.   
\end{proof}
The replacement
\[\begin{array}{lcl}
\depth{R}&\longmapsto & \dim{R},\\
\Ass{R}&\longmapsto &\Min{R}\ ({\rm{or\ }} \Assh{R}),\\
R\hyphen\rm{regular\ sequence} & \longmapsto & {\rm{subsystem\ of\ parameters\ for\ }} R,\\
\overline{R}\hyphen\rm{ regular\ sequence} &\longmapsto &{\rm{subsystem\ of\  parameters\ for\ }}\overline{R}
\end{array}\]
in the proof of Theorem \ref{mainthm1} yields our second main result: 
\begin{thm}{\label{mainthm2}}
Let $M$ be a finitely generated $R$-module and $\fX$ a prethick subcategory of $\mod{R}$. If $M$ satisfies the following conditions, then $\fX$ contains $k$. 
\begin{enumerate}
\item[(1)]$\dim{R}\geq \depth_{R}{M}+1$.
\item[(2)]$M/(x_{1},\ldots ,x_{i})M$ is in $\fX$ for all $i=0,\ldots,\depth_{R}{M}+1$ and all subsystems of parameters $x_{1},\ldots,x_{i}$ for $R$.
\end{enumerate}
\end{thm}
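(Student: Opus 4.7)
The plan is to mimic the proof of Theorem~\ref{mainthm1} line-by-line under the substitutions indicated just before the statement, inducting on $\depth_R M$. I first want to verify that $R$-regularity of the chosen elements is used in the proof of Theorem~\ref{mainthm1} only in two places: to satisfy the prime-avoidance requirement $x,y \notin \bigcup_{\p \in \Ass R}\p$, and to invoke hypothesis~(2) on the quotients $M/x^\al M$, $M/yM$. If so, replacing ``$R$-regular'' by ``parameter for $R$'' throughout converts the first to avoidance of $\Assh R$ and the second to the parameter-based hypothesis~(2) of Theorem~\ref{mainthm2}, and no intervening step uses regularity on $R$ itself.

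In the base case $\depth_R M = 0$, hypothesis~(1) gives $\dim R \geq 1$, so $\m \notin \Assh R$, and prime avoidance should yield $x \in \m$ missing every prime of $\Assh R \cup (\Ass M \setminus \{\m\})$. Such $x$ is a parameter for $R$, and so is $x^\al$, since $\dim R/(x^\al) = \dim R/\sqrt{(x^\al)} = \dim R/(x) = \dim R - 1$. Hence $M/x^\al M \in \fX$, and the four-term exact sequence
\[
0 \longrightarrow [0:_M x^\al] \longrightarrow M \xrightarrow{\,x^\al\,} M \longrightarrow M/x^\al M \longrightarrow 0,
\]
together with the prethick axioms, will force $N := [0:_M x^\al] \in \fX$. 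The identity $\Ass N = V(x) \cap \Ass M = \{\m\}$ — which requires only that $x \in \m$ avoid $\Ass M \setminus \{\m\}$, not regularity — identifies $N$ with $\Ga_\m(M)$, of finite length. The subsequent construction of $y \in \m^{n-1}$ as a parameter for $R$ that is $M/N$-regular with $yN \neq 0$, followed by the chain of short exact sequences exhibiting $yN$ as a nonzero finite direct sum of copies of $k$, will then go through exactly as in Theorem~\ref{mainthm1}.

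For the inductive step, I expect $\depth_R M \geq 1$ and $\dim R \geq 2$ together to yield $\m \notin \Ass M \cup \Assh R$, so prime avoidance will produce $x \in \m$ that is simultaneously $M$-regular and a parameter for $R$. Setting $\overline R = R/(x)$ and $\overline M = M/xM$, the inequality $\dim \overline R = \dim R - 1 \geq \depth_{\overline R}\overline M + 1$ follows immediately, and any subsystem of parameters $\overline{x_1}, \ldots, \overline{x_i}$ for $\overline R$ lifts to $x, x_1, \ldots, x_i$, which is a subsystem of parameters for $R$ of length $i+1 \leq \depth_R M + 1$; hypothesis~(2) then places $\overline M/(\overline{x_1}, \ldots, \overline{x_i})\overline M = M/(x,x_1,\ldots,x_i)M$ in $\fX$. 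Invoking the induction hypothesis on the prethick subcategory $\overline\fX \subseteq \mod\overline R$ of $\overline R$-modules belonging to $\fX$ as $R$-modules delivers $k \in \overline\fX \subseteq \fX$. The main obstacle is really the auditing task anticipated in the first paragraph: one must confirm that every module-theoretic manipulation in the base case of Theorem~\ref{mainthm1} depends on $x$ and $y$ only through the prime-avoidance data and the membership of specific quotients in $\fX$, never through an invisible appeal to $x$ or $y$ being a nonzerodivisor on $R$.
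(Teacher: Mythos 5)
Your proposal is correct and follows exactly the route the paper takes: the paper's entire ``proof'' of Theorem~\ref{mainthm2} is the substitution table displayed just before it (replace $\depth R$ by $\dim R$, $\Ass R$ by $\Assh R$, $R$-regular sequences by subsystems of parameters), and your argument is a careful execution of that substitution in the proof of Theorem~\ref{mainthm1}. Your auditing observation is the right one and it does hold up --- the regularity of $x$ and $y$ on $R$ is used only via prime avoidance and to invoke hypothesis~(2), and in particular the $\Tor_1^R(M/N,R/(y))=0$ step needed for the second induced short exact sequence depends only on $y$ being a nonzerodivisor on $M/N$, not on $R$.
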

Our Theorem \ref{mainthm1} and \ref{mainthm2} recover all the results shown in \cite{SSS}. In what follows, let $\Ph$ be any of the homological dimensions $\Gcdim_{R}$, $\Gdim_{R}$, $\lcidim_{R}$, $\Gidim_{R}$, $\pd_{R}$, and $\id_{R}$.
\begin{cor}{\cite[Theorem 3 and Corollaries 4--9]{SSS}}\label{c1}
Let $M$ be a finitely generated $R$-module and $0\leq t\leq d:=\depth{R}$ an integer.
If $\Ph{(M/(x_{1},\ldots ,x_{i})M)}<\infty$ for all $0\leq i\leq d-t$ and all $R$-regular sequences $x_{1},\ldots ,x_{i}$, then one has either $\depth_{R}{M}\geq  d-t$ or $\Ph{(k)}<\infty$. 
\end{cor}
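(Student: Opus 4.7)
The plan is to derive the corollary as a direct consequence of Theorem~\ref{mainthm1} applied to the prethick subcategory
\[
\fX = \{\, X \in \mod{R} \mid \Ph(X) < \infty \,\},
\]
whose prethickness is precisely the content of Example~\ref{qthick} for each of the six homological dimensions listed. I would argue by contrapositive: suppose that the conclusion fails, so $\depth_{R}{M} < d - t$, equivalently $\depth_{R}{M} + 1 \leq d - t$. The task is then to verify the two hypotheses of Theorem~\ref{mainthm1} for this $M$ and this $\fX$, and read off $k \in \fX$ as the conclusion.

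Condition (1) of Theorem~\ref{mainthm1} is immediate from the chain $\depth{R} = d \geq d - t \geq \depth_{R}{M} + 1$. For condition (2), any integer $i$ in the range $0 \leq i \leq \depth_{R}{M} + 1$ automatically satisfies $i \leq d - t$, so the hypothesis of the corollary supplies $\Ph(M/(x_{1},\ldots,x_{i})M) < \infty$ for every $R$-regular sequence $x_{1},\ldots,x_{i}$; this is exactly the membership $M/(x_{1},\ldots,x_{i})M \in \fX$ that condition (2) demands.

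With both hypotheses in place, Theorem~\ref{mainthm1} forces $k \in \fX$, i.e., $\Ph(k) < \infty$, which finishes the contrapositive. The only substantive step is Example~\ref{qthick}, which identifies each homological-dimension condition as cutting out a prethick subcategory; once that is available, the corollary becomes a straightforward translation of Theorem~\ref{mainthm1}, and the sole piece of bookkeeping is the inequality $\depth_{R}{M} + 1 \leq d - t$ coming from the negation of the conclusion. There is no real obstacle to anticipate beyond this.
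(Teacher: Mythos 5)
Your proposal is correct and follows exactly the same route as the paper's own proof: invoke Example~\ref{qthick} to see that $\{X : \Ph(X)<\infty\}$ is prethick, negate the conclusion $\depth_R M \ge d-t$ to get $\depth_R M + 1 \le d-t \le d = \depth R$, check that this yields both hypotheses of Theorem~\ref{mainthm1}, and conclude $\Ph(k)<\infty$. In fact your write-up is somewhat cleaner than the paper's, which contains a garbled inequality chain ($\depth_{R}{M}+1\leq d-t$ appearing where a $\le d$ is meant) in the same spot you handle with the explicit chain $d \ge d-t \ge \depth_R M + 1$.
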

\begin{proof}
By Example \ref{qthick}, the finitely generated $R$-modules $X$ with $\Ph{(X)}<\infty$ form a prethick subcategory of $\mod{R}$. If $\depth_{R}{M}\leq d-t-1$, then we have $\depth{R}\geq \depth_{R}{M}+t+1\geq \depth_{R}{M}+1\leq d-t$. Hence $\Ph{(k)}$ is finite by Theorem \ref{mainthm1}. 
\end{proof}
\begin{rem}
In Corollary \ref{c1}, the inequality $\depth_{R}{M}\geq d-t$ is equivalent to the inequality $\Ph{(M)}\leq t$ in the case where $\Ph$ is one of the homological dimensions $\Gcdim_{R}$, $\Gdim_{R}$, $\lcidim_{R}$ and $\pd_{R}$, because such $\Ph$ satisfies an Auslander-Buchsbaum-type equality. 
\end{rem}

\begin{cor}{\cite[Corollary 10]{SSS}}{\label{c3}}
Let $M$ be a finitely generated $R$-module. The following conditions are equivalent. 
\begin{enumerate}
\item[(1)]$R$ is Cohen-Macaulay. 
\item[(2)]There exists a finitely generated $R$-module $M$ such that $\Gcdim_{R}{(M/{\a}M)}$ is finite for every ideal $\a$ generated by a subsystem of parameters for $R$.
\item[(3)]For every ideal $\a$ generated by a subsystem of parameters for $R$, one has $\Gcdim_{R}{(R/{\a})}$ is finite. 
\end{enumerate}
\end{cor}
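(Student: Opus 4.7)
The plan is to prove the cycle $(1)\Rightarrow(3)\Rightarrow(2)\Rightarrow(1)$, with the first two implications being essentially formal and $(2)\Rightarrow(1)$ being the main content, to be obtained via Theorem \ref{mainthm2}.

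For $(1)\Rightarrow(3)$, I would use the fact that in a Cohen-Macaulay ring every subsystem of parameters $x_{1},\dots,x_{i}$ is an $R$-regular sequence, so the Koszul complex on $x_{1},\dots,x_{i}$ furnishes a finite free resolution of $R/\a$. Thus $\pd_{R}(R/\a)<\infty$, whence $\Gcdim_{R}(R/\a)<\infty$. The implication $(3)\Rightarrow(2)$ is obtained by taking $M=R$.

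For the main step $(2)\Rightarrow(1)$, let $\fX$ denote the full subcategory of $\mod{R}$ consisting of modules of finite Gorenstein dimension with respect to $C$; this is prethick by Example \ref{qthick}. Pick an $M$ as provided by $(2)$. Taking $\a=0$ in the hypothesis gives $\Gcdim_{R}M<\infty$, and the Auslander-Buchsbaum formula for $\Gcdim$ then yields $\depth_{R}M\le\depth{R}$. Assume for contradiction that $R$ is not Cohen-Macaulay, so $\depth{R}<\dim{R}$; combined with the preceding inequality this gives $\depth_{R}M+1\le\dim{R}$, verifying hypothesis $(1)$ of Theorem \ref{mainthm2}. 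Hypothesis $(2)$ of Theorem \ref{mainthm2} follows immediately from condition $(2)$ of the corollary, since each $i$ with $0\le i\le\depth_{R}M+1$ also satisfies $i\le\dim{R}$, and hence any such $i$ corresponds to a legitimate subsystem of parameters. Applying Theorem \ref{mainthm2} yields $k\in\fX$, i.e., $\Gcdim_{R}k<\infty$.

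The main obstacle, and in my view the crux of the argument, is drawing the final conclusion from $\Gcdim_{R}k<\infty$. For this I would invoke the standard result (essentially due to Christensen) that, for a local ring $R$ equipped with a semidualizing module $C$, finiteness of $\Gcdim_{R}k$ forces $C$ to be a dualizing module for $R$; in particular $R$ must then be Cohen-Macaulay. This contradicts the working hypothesis, completing the proof.
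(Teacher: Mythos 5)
Your proposal is correct and follows essentially the same route as the paper's proof: the formal implications $(1)\Rightarrow(3)\Rightarrow(2)$ are handled identically, and for $(2)\Rightarrow(1)$ you invoke Theorem~\ref{mainthm2} together with Example~\ref{qthick} to derive $\Gcdim_R k<\infty$ and obtain a contradiction. The only difference is that you spell out two steps the paper leaves implicit, namely that $\depth_R M\le\depth R$ follows from $\Gcdim_R M<\infty$ via the Auslander--Buchsbaum formula, and that $\Gcdim_R k<\infty$ forces $C$ to be dualizing and hence $R$ to be Cohen--Macaulay.
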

\begin{proof}
$(1)\Longrightarrow (3)$: Since $R$ is Cohen-Macaulay, $\a$ is generated by an $R$-regular sequence, so $\Gcdim_{R}{(R/{\a})}$ is finite. \\
$(3)\Longrightarrow (2)$: This implication is shown by letting $M=R$.\\ 
$(2)\Longrightarrow (1)$: Assume that $R$ is not Cohen-Macaulay, then we have $\depth_{R}{M}\leq \depth{R}<\dim{R}$. From Theorem \ref{mainthm2}, we get $\Gcdim_{R}{k}<\infty$ and it follows that $R$ is Cohen-Macaulay. This is a contradiction.
\end{proof}
In relation to Corollary \ref{c3}, one can also deduce the result below from Theorem \ref{mainthm2}.
\begin{cor}\label{c2}
Let $\Ph\in\{\Gcdim_{R}, \Gdim_{R}, \lcidim_{R}, \pd_{R}\}$. If there exists a finitely generated $R$-module $M$ such that $0<\Ph{(M/\underline{x}M)}<\infty$ for all subsystems of parameters $\underline{x}$ for $R$, then
$\Ph{(k)}<\infty$. 
\end{cor}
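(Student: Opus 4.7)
The plan is to apply Theorem \ref{mainthm2} directly to the prethick subcategory $\fX$ of $\mod R$ consisting of finitely generated $R$-modules $X$ with $\Ph(X)<\infty$. By Example \ref{qthick}, $\fX$ is indeed prethick for each of the four dimensions listed. All I need to do is verify that the given module $M$ satisfies the two hypotheses of Theorem \ref{mainthm2} with this choice of $\fX$.

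First I would plug in the empty sequence, which is a (length-zero) subsystem of parameters for $R$. The hypothesis then reads $0<\Ph(M)<\infty$. The crucial feature of the four homological dimensions in the statement, namely $\Gcdim_R$, $\Gdim_R$, $\lcidim_R$ and $\pd_R$, is that each of them satisfies an Auslander--Buchsbaum-type equality when finite: $\Ph(M)=\depth R-\depth_R M$ (as noted in the remark following Corollary \ref{c1}). Combined with $\Ph(M)>0$, this yields $\depth R\geq \depth_R M+1$, and since $\dim R\geq \depth R$, condition (1) of Theorem \ref{mainthm2} follows.

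Next I would verify condition (2) of Theorem \ref{mainthm2}. For each integer $i$ with $0\leq i\leq \depth_R M+1$ and each subsystem of parameters $x_1,\ldots,x_i$ for $R$, the assumption gives $\Ph(M/(x_1,\ldots,x_i)M)<\infty$, which is exactly the statement that $M/(x_1,\ldots,x_i)M\in\fX$. (Note that $\depth_R M+1\leq\depth R\leq\dim R$, so the length $i$ falls in the admissible range for subsystems of parameters.) Theorem \ref{mainthm2} then yields $k\in\fX$, i.e. $\Ph(k)<\infty$.

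No step looks like a genuine obstacle: the only subtlety is the initial observation that $\Ph(M)>0$ forces $\depth R\geq\depth_R M+1$, which is where the positivity hypothesis in $0<\Ph(M/\underline{x}M)$ is actually used. Everything else is bookkeeping to match the assumption to the hypotheses of Theorem \ref{mainthm2}.
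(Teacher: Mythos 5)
Your proposal is correct and follows essentially the same route as the paper: specialize the hypothesis to the empty subsystem of parameters to get $0<\Ph(M)<\infty$, invoke the Auslander--Buchsbaum-type equality to obtain $\depth R-\depth_R M=\Ph(M)\geq 1$, hence $\dim R\geq\depth_R M+1$, and then apply Theorem~\ref{mainthm2} to the prethick subcategory from Example~\ref{qthick}. The paper's proof is just a more compressed version of exactly this argument.
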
 
\begin{proof}
As $0<\Ph{(M)}<\infty$, we have $\depth{R}-\depth_{R}{M}=\Ph{(M)}\geq 1$, so $\dim{R}\ge\depth_{R}{M}+1$. Theorem \ref{mainthm2} implies $\Ph{(k)}<\infty$. 
\end{proof}
Using Theorem \ref{mainthm1} and Example \ref{qthick}, we obtain the following new results concerning $\Tor$ and $\Ext$ modules, complexities and Betti numbers. 
\begin{cor}\label{c4}
Let $M$, $N$ be finitely generated $R$-modules with $\depth{R}>\depth_{R}{M}$. If $\Tor^{R}_{\gg 0}({M/(x_{1}, \ldots, x_{i})M, N})=0$ (respectively, $\Ext^{\gg 0}_{R}({M/(x_{1}, \ldots, x_{i})M, N})=0$) for all $i=0, \ldots, \depth_{R}{M}+1$ and all $R$-regular sequences $x_{1}, \ldots, x_{i}$, then $\pd_{R}{N}<\infty$ (respectively, $\id_{R}{N}<\infty$).  
\end{cor}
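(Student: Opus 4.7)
The plan is to recognize this corollary as a direct application of Theorem \ref{mainthm1} to two specific prethick subcategories, namely those given by properties (7) and (8) of Example \ref{qthick}.

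First I would define, for the $\Tor$-case, the full subcategory
\[
\fX = \{X\in\mod R \mid \Tor^R_{\gg0}(X,N)=0\},
\]
and for the $\Ext$-case,
\[
\fX' = \{X\in\mod R \mid \Ext^{\gg0}_R(X,N)=0\}.
\]
By Example \ref{qthick}(7) and (8), both $\fX$ and $\fX'$ are prethick subcategories of $\mod R$, so that machinery is already available.

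Next, I would verify that the module $M$ satisfies the two hypotheses of Theorem \ref{mainthm1} with respect to $\fX$ (resp.\ $\fX'$). Condition (1) is immediate from the assumption $\depth R>\depth_R M$, which rearranges to $\depth R\geq \depth_R M+1$. Condition (2) is precisely the vanishing hypothesis stated in the corollary: $M/(x_1,\dots,x_i)M$ lies in $\fX$ (resp.\ $\fX'$) for every $i=0,\dots,\depth_R M+1$ and every $R$-regular sequence $x_1,\dots,x_i$. Applying Theorem \ref{mainthm1} then yields $k\in\fX$ (resp.\ $k\in\fX'$), that is, $\Tor^R_{\gg0}(k,N)=0$ (resp.\ $\Ext^{\gg0}_R(k,N)=0$).

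Finally, I would invoke the standard identifications
\[
\pd_R N = \sup\{i\mid \Tor^R_i(k,N)\neq 0\},\qquad \id_R N = \sup\{i\mid \Ext^i_R(k,N)\neq 0\}
\]
for a finitely generated module $N$ over a Noetherian local ring (the latter being Bass's formula / the Auslander--Buchsbaum--Serre type characterization of finite injective dimension). These conclude $\pd_R N<\infty$ and $\id_R N<\infty$ respectively. The argument is essentially a formal reduction: the substantive mathematical content is carried entirely by Theorem \ref{mainthm1} together with Example \ref{qthick}, and there is no real obstacle once the correct prethick subcategory is identified; the only delicate point is ensuring that the statement's hypothesis is an exact match for condition (2) of the theorem in both cases, which it is.
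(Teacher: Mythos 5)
Your proof is correct and follows exactly the route the paper intends: apply Theorem \ref{mainthm1} to the prethick subcategories of Example \ref{qthick}(7) and (8), then use the standard characterizations of $\pd_R N$ and $\id_R N$ via vanishing of $\Tor^R_i(k,N)$ and $\Ext^i_R(k,N)$. The paper leaves this corollary without a written proof, stating only that it follows from Theorem \ref{mainthm1} and Example \ref{qthick}, and your argument is precisely the one being referenced.
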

\begin{cor}\label{c5}
Let $M$ be a finitely generated $R$-module with $\depth{R}>\depth_{R}{M}$. 
If the $R$-module ${M/(x_{1},\ldots ,x_{i})M}$ has finite complexity (respectively, bounded Betti numbers) for all $i=0,\ldots ,\depth_{R}{M}+1$ and all $R$-regular sequences $x_{1},\ldots ,x_{i}$, then $R$ is a complete intersection (respectively, hypersurface).  
\end{cor}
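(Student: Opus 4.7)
The plan is to apply Theorem \ref{mainthm1} directly, then conclude via the classical structure theorems for the Betti numbers of the residue field. I would take $\fX$ to be the prethick subcategory of $\mod{R}$ consisting of all finitely generated $R$-modules of finite complexity (in the complete intersection case), or of all finitely generated $R$-modules with bounded Betti numbers (in the hypersurface case). Both choices form prethick subcategories by items (10) and (11) of Example \ref{qthick}.

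Next I would verify the two hypotheses of Theorem \ref{mainthm1} for this $M$ and $\fX$. The strict inequality $\depth R > \depth_R M$ is precisely condition (1) $\depth R \ge \depth_R M + 1$, and the assumption that $M/(x_1,\ldots,x_i)M$ has finite complexity (respectively, bounded Betti numbers) for every $0 \le i \le \depth_R M + 1$ and every $R$-regular sequence $x_1,\ldots,x_i$ is exactly condition (2). Theorem \ref{mainthm1} then yields $k \in \fX$; that is, $\Cx_R k < \infty$ in the first case and $\sup_{i \ge 0} \beta^R_i(k) < \infty$ in the second.

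Finally, I would invoke the classical characterizations of complete intersection and hypersurface rings in terms of the asymptotic growth of the Betti numbers of the residue field: Gulliksen's theorem gives that $R$ is a complete intersection if and only if $\Cx_R k$ is finite, and it is a classical fact that $R$ is a hypersurface if and only if the Betti numbers of $k$ form a bounded sequence (both statements are recorded in the reference used for items (10) and (11) of Example \ref{qthick}). The main content of the argument is really Theorem \ref{mainthm1} itself, which has already been proved; the present corollary is essentially a translation of these classical ring-theoretic characterizations into the prethick-subcategory framework, so no substantive obstacle remains.
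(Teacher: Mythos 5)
Your argument is correct and is exactly the route the paper takes: use Example \ref{qthick}(10) and (11) to obtain the prethick subcategories, invoke Theorem \ref{mainthm1} to conclude $k$ lies in them, and then apply the classical characterizations (Gulliksen for complete intersections, bounded Betti numbers for hypersurfaces). The paper even states this last step explicitly as the one additional fact needed, so there is no divergence.
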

For the proof of Corollary \ref{c5} we use the fact that a local ring $R$ whose residue field has finite complexity (respectively, bounded Betti numbers) as an $R$-module is a complete intersection (respectively, hypersurface).  
\section*{acknowledgments}
The authors would like to express their deep gratitude to their supervisor Ryo Takahashi for a lot of comments, suggestions and discussions. 

\end{document}